\theoremstyle{plain}
\newtheorem{theorem}{Theorem}
\newtheorem*{theorem*}{Theorem}
\newtheorem{lemma}[theorem]{Lemma}
\newtheorem{prop}[theorem]{Proposition}
\theoremstyle{definition}
\theoremstyle{remark}
\newtheorem{example}[theorem]{Example}
\numberwithin{equation}{section}
\date{}
\title{Weak approximation of an invariant measure and a low boundary
of the entropy, II}
\author{B.M. Gurevich\thanks{Email: gurevicu@mech.math.msu.su The work is
supported in part by the RFBR grant 14-01-00379 a.}\\
\small{Moscow State University and Institute for Information
Transmission}
Problems\\
\small {of Russian Academy of Sciences}}
\begin{document}

\maketitle

\thispagestyle{empty}

\begin{abstract}
For a measurable map $T$ and a sequence of $T$-invariant probability
measures $\mu_n$ that converges in some sense to a $T$-invariant
probability measure $\mu$, an estimate from below for the
Kolmogorov--Sinai entropy of $T$ with respect to $\mu$ is suggested
in terms of the entropies of $T$ with respect to $\mu_1$, $\mu_2$,
\dots. This result is obtained under the assumption that some
generating partition has finite entropy. By an explicite example it
is shown that, in general, this assumption cannot be removed.
\end{abstract}
\medskip

\qquad\ \small{Keywords: invariant measure, metric entropy, Markov
shift}

\bigskip

\section{\bf Introduction}
\label{Introd}
\medskip

In problems of ergodic theory and thermodynamic formalism it is
sometimes necessary to estimate the entropy of a measure preserving
map. If this map acts in a compact metric space and is expansive,
one can use the fact that the entropy is semi-continuous from above
on the space of invariant probability measures with weak topology.
Both conditions --- the compactness and expansiveness are essential
in this context, and if at least one of them fails, one has to use
other tools. Two such tools are suggested in this paper (see
theorems \ref{main} and \ref{two}). In both cases a countable
generating partition appears. In Theorem \ref{main} we assume that
the entropy of this partition with respect to the measure of
interest is finite and in Section \ref{Pits} we show that this
assumption cannot be dropped.

Although we give up topological assumptions, primarily the
compacness, the results of this type turn out to be useful for
smooth dynamicas (see, e.g., \cite{BG}).

We use standard notation, terminology and results of entropy theory
(see, e.g., \cite{B} -- \cite{EM}, \cite{R}). Let $T$ be an
automorphism of a measurable space $(X,\mathcal F)$ and $\mu$ a
$T$-invariant probability measure. For a countable partition $\eta$
of $X$ we write $B\in\eta$ and $B\subset\eta$ if the set $B$ is an
atom of $\eta$ or a union of such atoms, respectively. We call
$\eta$ a generating partition (or a generator) for $(X,\mathcal
F,T,\mu)$ (usually just for $(T,\mu)$) if the $\sigma$-algebra
generated by the partitions $T^i\eta$, $i\in\mathbb Z$, coincides
$\mu$-mod 0 with $\mathcal F$. In what follows we usually omit
$\mathcal F$ from the notation.

\begin{theorem}
\label{main} Assume that for a countable measurable partition $\xi$
of $X$ and a $T$-invariant probability measure $\mu$, the entropy
$H_{\mu}(\xi)$ is finite and that there exist a sequence of
$T$-invariant probability measures $\mu_n$ and sequences of numbers
$r_n\in\mathbb N$, $\varepsilon_n>0$ such that
\begin{equation}
\label{limits} \lim_{n\to\infty}r_n=\infty,\ \
\lim_{n\to\infty}\varepsilon_n=0,
\end{equation}
\begin{equation}
\label{uslovie} |\mu(A)-\mu_n(A)|\le\varepsilon_n\mu_n(A)\text{\ for
all\ } A\in\vee_{i=0}^{r_n}T^{-i}\xi,\ n\ge1,
\end{equation}
\begin{equation}
\label{generator} \xi\text{\ is a generator for }\ (T,\mu) \text{
and } (T,\mu_n), n\ge1.
\end{equation}
Then
\begin{equation}
\label{conclud} h_{\mu}(T)\ge\limsup_{n\to\infty}h_{\mu_n}(T).
\end{equation}

\end{theorem}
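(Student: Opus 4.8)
The plan is to reduce both sides of \eqref{conclud} to the fixed generator $\xi$ and then to compare the measures $\mu$ and $\mu_n$ on the finite refinement $\eta_n:=\vee_{i=0}^{r_n}T^{-i}\xi$, exploiting that \eqref{uslovie} is a \emph{multiplicative} closeness condition. First, since by \eqref{generator} $\xi$ generates for $(T,\mu)$ and for every $(T,\mu_n)$ and $H_\mu(\xi)<\infty$, the Kolmogorov--Sinai theorem gives $h_\mu(T)=h_\mu(T,\xi)$ and $h_{\mu_n}(T)=h_{\mu_n}(T,\xi)$, so it suffices to estimate $h_{\mu_n}(T,\xi)$. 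I would also record the standard facts that $h_\nu(T,\xi)=\inf_r\frac1{r+1}H_\nu(\vee_{i=0}^{r}T^{-i}\xi)=\lim_{r\to\infty}\frac1{r+1}H_\nu(\vee_{i=0}^{r}T^{-i}\xi)$ for any $T$-invariant $\nu$, and that $H_\mu(\vee_{i=0}^{r}T^{-i}\xi)\le(r+1)H_\mu(\xi)<\infty$ by subadditivity and $T$-invariance.

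Next I would rewrite \eqref{uslovie} as the two-sided bound $(1-\varepsilon_n)\mu_n(A)\le\mu(A)\le(1+\varepsilon_n)\mu_n(A)$ and note that, being additive over disjoint atoms, it holds not only for atoms but for every $A\subset\eta_n$. The heart of the argument is a pointwise comparison of the function $\varphi(x)=-x\log x$ on the atoms of $\eta_n$. Writing $\mu(A)=\mu_n(A)(1+\delta_A)$ with $|\delta_A|\le\varepsilon_n$, a short computation gives the identity $\varphi(\mu_n(A))=\frac{\varphi(\mu(A))}{1+\delta_A}+\mu_n(A)\log(1+\delta_A)$, whence $\varphi(\mu_n(A))\le\frac{1}{1-\varepsilon_n}\varphi(\mu(A))+\varepsilon_n\mu_n(A)$ because $\varphi\ge0$ and $\log(1+\delta_A)\le\varepsilon_n$. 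Summing over the atoms of $\eta_n$ yields the one-sided entropy estimate
\begin{equation*}
H_{\mu_n}(\eta_n)\le\frac{1}{1-\varepsilon_n}\,H_\mu(\eta_n)+\varepsilon_n.
\end{equation*}

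Finally I would divide by $r_n+1$ and pass to the limit. Since $r_n\to\infty$ and $H_\mu(\xi)<\infty$, the rate $\frac1{r_n+1}H_\mu(\eta_n)\to h_\mu(T,\xi)=h_\mu(T)$, while $\varepsilon_n\to0$; hence the displayed right-hand side, divided by $r_n+1$, tends to $h_\mu(T)$. On the other hand, by the infimum characterization $h_{\mu_n}(T)=h_{\mu_n}(T,\xi)\le\frac1{r_n+1}H_{\mu_n}(\eta_n)$. Combining these and taking $\limsup_{n\to\infty}$ gives $\limsup_n h_{\mu_n}(T)\le h_\mu(T)$, which is \eqref{conclud}. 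The one genuinely delicate point is that $\varphi(x)=-x\log x$ is not Lipschitz near $0$, so mere additive closeness of $\mu$ and $\mu_n$ on atoms would be worthless; the proof works precisely because \eqref{uslovie} is multiplicative, turning the comparison into the harmless factor $(1-\varepsilon_n)^{-1}$, and because $H_\mu(\xi)<\infty$ keeps the $\mu$-entropy rate finite and convergent. This is exactly the role of the finiteness hypothesis, in agreement with its necessity demonstrated in Section \ref{Pits}.
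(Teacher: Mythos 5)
Your proof is correct, and at the skeleton level it is the same argument as the paper's: reduce to the generator $\xi$ via Kolmogorov--Sinai, bound $h_{\mu_n}(T,\xi)$ by the block entropy $\frac{1}{r_n+1}H_{\mu_n}(\vee_{i=0}^{r_n}T^{-i}\xi)$ using subadditivity, compare that block entropy with the one for $\mu$ via the multiplicative condition \eqref{uslovie}, divide by $r_n$, and let $n\to\infty$. The genuine difference is in the key comparison step. The paper first flips \eqref{uslovie} into control relative to $\mu$ (using that $|a-b|\le\varepsilon b$ with $\varepsilon\le 1/2$ implies $|a-b|\le 2\varepsilon a$) and then applies Lemma~\ref{entrop_razn}, whose proof is a case analysis of $\varphi(t)=-t\ln t$ (monotonicity on $[0,e^{-1}]$, a Lipschitz bound on $[(3e)^{-1},1]$) and which carries the restriction $c\in(0,1/3)$; this yields $H_{\mu_n}\le(1+2\varepsilon_n)H_\mu+2\varepsilon_n\ln 3$. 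You instead use the exact identity $\varphi(\mu_n(A))=\varphi(\mu(A))/(1+\delta_A)+\mu_n(A)\log(1+\delta_A)$, where $\mu(A)=\mu_n(A)(1+\delta_A)$, $|\delta_A|\le\varepsilon_n$ (the identity checks out, and the degenerate atoms with $\mu_n(A)=0$ force $\mu(A)=0$, so they contribute nothing). This works directly with \eqref{uslovie} as stated --- no flip, no case analysis, no smallness restriction beyond $\varepsilon_n<1$ --- and gives the cleaner bound $H_{\mu_n}(\eta_n)\le(1-\varepsilon_n)^{-1}H_\mu(\eta_n)+\varepsilon_n$; after dividing by $r_n+1$ the two bounds are equally effective. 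One ordering flaw you should repair: you invoke Kolmogorov--Sinai for $(T,\mu_n)$ at the outset, but that theorem requires $H_{\mu_n}(\xi)<\infty$, which is not among the hypotheses --- indeed its failure is exactly the mechanism of Pitskel's counterexample in Section~\ref{Pits}. Fortunately your own displayed inequality, combined with $H_\mu(\eta_n)\le(r_n+1)H_\mu(\xi)<\infty$, gives $H_{\mu_n}(\xi)\le H_{\mu_n}(\eta_n)<\infty$ for every $n$ with $\varepsilon_n<1$, i.e.\ for all large $n$, which is all the $\limsup$ needs; state this explicitly before using $h_{\mu_n}(T)=h_{\mu_n}(T,\xi)$ (the paper is equally terse on this point).
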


This theorem will be proved in Section 2. In Section 3 we show that
the assumption $H_{\mu}(\xi)<\infty$ cannot be in general omitted
and in Section 4 establish another estimate for $h_{\mu}(T)$ without
the requirement that $H_{\mu}(\xi)<\infty$.
\bigskip

\section{Proof of theorem \ref{main}}
\label{Proof}
\medskip

We begin the proof of Theorem \ref{main} with two simple lemmas.
\begin{lemma}
\label{entrop_razn} Let $\mathbf p:=(p_i)_{i\in\mathbb N}$, $\mathbf
q:=(q_i)_{i\in\mathbb N}$, where $p_i,q_i\ge 0$ for all $i$ and
$\sum_ip_i=\sum_iq_i=1$. Let also $H(\mathbf p):=-\sum_{i\in\mathbb
N}p_i\ln p_i$ (with $0\ln 0=0$) and $H(\mathbf q)$ be defined
similarly. Assume that for some $c\in(0,1/3)$,
\begin{equation}
\label{sravn_ver} |p_i-q_i|\le cq_i,\ \ i=1,2,\dots.
\end{equation}
Then
\begin{equation*}
\label{entrop_razn1} H(\mathbf p)\le(1+c)H(\mathbf q)+c\ln 3.
\end{equation*}
\end{lemma}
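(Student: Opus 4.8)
The plan is to estimate $-p_i\ln p_i$ termwise against $-q_i\ln q_i$ after isolating the multiplicative perturbation. Hypothesis \eqref{sravn_ver} gives the two-sided bound $(1-c)q_i\le p_i\le(1+c)q_i$ for every $i$; in particular $p_i=0$ whenever $q_i=0$, so only indices with $q_i>0$ matter. If $H(\mathbf q)=\infty$ the asserted inequality is trivial, so I may assume $H(\mathbf q)<\infty$. For $q_i>0$ I write $p_i=q_i(1+t_i)$ with $t_i\in[-c,c]$ and split
\[
-p_i\ln p_i=p_i\bigl(-\ln q_i\bigr)+p_i\bigl(-\ln(1+t_i)\bigr).
\]

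First I would handle the sum of the first pieces. Since $q_i\le1$ we have $-\ln q_i\ge0$, so from $p_i\le(1+c)q_i$ each term is at most $(1+c)q_i(-\ln q_i)$, and summing gives $\sum_i p_i(-\ln q_i)\le(1+c)H(\mathbf q)$. This step uses only the upper bound in \eqref{sravn_ver} together with the nonnegativity of $-\ln q_i$; all quantities are dominated by $(1+c)q_i(-\ln q_i)$, which is summable because $H(\mathbf q)<\infty$, so no convergence difficulty arises here.

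The main obstacle is the second sum, $\sum_i p_i(-\ln(1+t_i))=\sum_i q_i\,v(t_i)$ with $v(t):=-(1+t)\ln(1+t)$, since $v$ has no definite sign on $[-c,c]$. Here I see two routes. The clean one is to notice that $\sum_i p_i\ln(1+t_i)=\sum_i p_i\ln(p_i/q_i)\ge0$ by Gibbs' inequality, whence the second sum is $\le0$ and in fact $H(\mathbf p)\le(1+c)H(\mathbf q)$, which is stronger than required. If one prefers an elementary self-contained estimate, I would instead check that $v$ is decreasing on $[-c,c]$ (its derivative $-1-\ln(1+t)$ is negative there because $1+t\ge1-c>e^{-1}$), so that $v(t_i)\le v(-c)=-(1-c)\ln(1-c)$, and then use $-\ln(1-c)\le c/(1-c)$ to get $v(t_i)\le c$; summing against $q_i$ yields $\sum_i p_i(-\ln(1+t_i))\le c\sum_i q_i=c$.

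Combining the two estimates gives $H(\mathbf p)\le(1+c)H(\mathbf q)+c$, and since $\ln3>1$ this is dominated by $(1+c)H(\mathbf q)+c\ln3$, as claimed; the constant $\ln3$ thus leaves some slack. The only point to watch throughout is the legitimacy of the termwise splitting, which is justified because each summand is controlled by a nonnegative summable bound, so the rearrangement of series introduces no $\infty-\infty$ ambiguity.
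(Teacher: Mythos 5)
Your proof is correct, and it takes a genuinely different route from the paper's. The paper works directly with $\varphi(t)=-t\ln t$ and splits the \emph{index set} by the size of $q_i$: for $q_i\le 1/(2e)$ it uses monotonicity of $\varphi$ on $[0,e^{-1}]$ together with the identity $\varphi((1+c)q_i)=q_i\varphi(1+c)+(1+c)\varphi(q_i)$ and $\varphi(1+c)\le0$; for $q_i>1/(2e)$ it uses the Lipschitz bound $|\varphi'(t)|\le\ln 3$ on $[(3e)^{-1},1]$, which is exactly where both the constant $\ln 3$ and the hypothesis $c<1/3$ enter (that hypothesis guarantees $p_i\ge(1-c)q_i>(3e)^{-1}$ on the second set). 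You instead split each \emph{term} via $p_i=q_i(1+t_i)$ into a cross-entropy part, bounded by $(1+c)H(\mathbf q)$, and a perturbation part $\sum_i q_i v(t_i)$ with $v(t)=-(1+t)\ln(1+t)$, which you bound either by $0$ (Gibbs, i.e.\ nonnegativity of $\sum_i p_i\ln(p_i/q_i)$, legitimate here since the ratios $p_i/q_i$ are uniformly bounded away from $0$ and $\infty$, so the sum converges absolutely) or by $c$ via the elementary estimate $v(t)\le v(-c)\le c$. Your attention to the $\infty-\infty$ issue (reducing to $H(\mathbf q)<\infty$ and dominating each piece by a summable bound) is exactly the point that makes the termwise splitting rigorous. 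What your route buys: strictly sharper constants --- $H(\mathbf p)\le(1+c)H(\mathbf q)$ via Gibbs, or $(1+c)H(\mathbf q)+c$ via the elementary bound, either of which implies the stated inequality since $\ln 3>1$ --- and a weaker hypothesis: the Gibbs route works for any $c\in(0,1)$ and the elementary route for any $c<1-e^{-1}$, so the restriction $c<1/3$ is an artifact of the paper's method rather than a necessity (harmless in the application, where $c=2\varepsilon_n\to0$). What the paper's route buys is self-containedness at the level of pointwise calculus facts about $\varphi$ alone, with no appeal to Gibbs' inequality and no need to discuss absolute convergence of a signed series.
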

\begin{proof}
Denote $\varphi(t):=-t\ln t$, $t\ge 0$. It is clear that (a)
$\varphi(t)$ increases when $0\le t\le e^{-1}$, (b) $\varphi(t)\le
0$ when $t\ge 1$, (c) $-1\le\varphi'(t)\le\ln 3$ when $(3e)^{-1}\le
t\le 1$. Hence (see also (\ref{sravn_ver}))
\begin{align*}
H(\mathbf p)=&\sum_{i\in\mathbb N}\varphi(p_i)=\sum_{i:q_i\le
1/2e}\varphi(p_i)+\sum_{i:q_i>1/2e}\varphi(p_i)\le\sum_{i:q_i\le
1/2e}\varphi((1+c)q_i)\notag \\
+&\sum_{i:q_i>1/2e}[\varphi(q_i)+|p_i-q_i|\ln 3]=\sum_{i:q_i\le
1/2e}[q_i\varphi(1+c)+(1+c)\varphi(q_i)] \notag \\
+&\sum_{i:q_i>1/2e}[\varphi(q_i)+|p_i-q_i|\ln
3]\le(1+c)\sum_{i\in\mathbb N}\varphi(q_i)+c\ln 3 \notag \\
=&(1+c)H(\mathbf q)+ c\ln3.
\end{align*}
\end{proof}
\begin{lemma}
\label{AinEta} If $\eta$ is a countable measurable partition of the
space $X$ and if, for probability measures $\mu$ and $\nu$ on
$(X,\mathcal F)$, every $A\in\eta$ and some $\varepsilon>0$, we have
$|\mu(A)-\nu(A)|\le\varepsilon\nu(A)$, then the same is true for
every $A\subset\eta$.
\end{lemma}

The proof is evident and is omitted.
\medskip

We continue the proof of Theorem \ref{main}. For all $k,l\in\mathbb
Z$, $k\le l$, we denote $\xi_k^l(T):=\vee_{i=k}^lT^i\xi$.

It is known \cite{R}, \cite{EM} that if $H_{\mu_0}(\xi)<\infty$ and
(\ref{generator}) holds for $n=0$, then
\begin{equation*}
\label{lim_entrop}
h_{\mu_0}(T)=\lim_{n\to\infty}\frac{1}{n}H_{\mu_0}(\xi_{-n}^{-1}(T)),
\end{equation*}
and the sequence on the right hand side is non-increasing.

It easy to verify that if $\varepsilon\le 1/2$, then
$|a-b|\le\varepsilon b$, $a,b\ge 0$ imply that $|a-b|\le
2\varepsilon a$. Therefore by (\ref{uslovie}) for all $n$ such that
$\varepsilon_n\le1/2$ and all $A\in\xi(-r_n,0)$, we have
\begin{equation}
\label{uslovie1} |\mu_0(A)-\mu_n(A)|\le2\varepsilon_n\mu_0(A).
\end{equation}

For such $n$ we compare $H_{\mu_0}(\xi_{-r_n}^{-1}(T))$ and
$H_{\mu_n}(\xi_{-r_n}^{-1}(T))$.

An arbitrary numbering of the atoms $A\in\xi_{-r_n}^{-1}(T)$ yields
a sequence $A_1,A_2,\dots$. Let $p_i:=\mu_n(A_i)$,
$q_i:=\mu_0(A_i)$. By applying Lemma \ref{entrop_razn} with
$c=2\varepsilon_n$ (see \eqref{uslovie1}) we obtain
\begin{equation*}
\label{entrop_razn2}
H_{\mu_n}(\xi_{-r_n}^{-1}(T))\le(1+2\varepsilon_n)
H_{\mu_0}(\xi_{-r_n}^{-1}(T))+2\varepsilon_n\ln 3.
\end{equation*}

For all sufficiently large $n$, this implies that
\begin{align*}
\label{osn_ner}
h_{\mu_n}(T,\xi)\le&\frac{1}{r_n}H_{\mu_n}(\xi_{-r_n}^{-1}(T)) \notag\\
\le&\frac{1}{r_n}(1+2\varepsilon_n)H_{\mu_0}(\xi_{-r_n}^{-1}(T))+
\frac{2}{r_n}\varepsilon_n\ln 3,
\end{align*}
or
\begin{equation*}
\label{osn_ner1} h_{\mu_n}(T,\xi)\le
\frac{1}{r_n}(1+2\varepsilon_n)H_{\mu_0}(\xi_{-r_n}^{-1}(T))+
\frac{2}{r_n}\varepsilon_n\ln 3.
\end{equation*}
Therefore (see (\ref{limits}))
$$
h\le\limsup_{n\to\infty}h_{\mu_n}(T,\xi)\le h_{\mu_0}(T).
$$
The proof is completed.

\section{Pitskel's example}
\label{Pits}
\medskip

Here we use an example constructed by B.\,Pitskel \cite{P} for a
different purpose.

Let $Y=\mathbb N^{\mathbb Z}$ be the sequence space equipped with
the cylinder $\sigma$-algebra $\mathcal C$ and $\tau$ the right one
step shift on $Y$ defined by $\tau y=y'$, where
$$
y=(y_i,\ i\in\mathbb Z),\ y'=(y'_i,\ i\in\mathbb Z),\ y'_i=y_{i-1},\
i\in\mathbb Z.
$$
We denote the partition of $Y$ into the one-dimensional cylinders
$C_k=\{y\in Y:y_0=k\}$, $k\in\mathbb N$, by $\eta$ and introduce the
$\tau$-invariant product measure $\nu$ on $Y$ by $\nu(C_k)=1/2^k$.
Then $(Y,\nu,\tau)$ is a Bernoulli shift and
$h_\nu(\tau)=H_\nu(\eta)<\infty$.

We now define a partition $\zeta>\eta$ as follows: for every
$k\in\mathbb N$, each atom of $\zeta$ lying in $C_k$ is an atom of
$\eta_{-2^k}^0(\tau)$.

One can easily check that
$H_\nu(\zeta|(\zeta_{-\infty}^{-1}(\tau))=H_\nu(\eta)=h_\nu(\tau)$.
But the following is true.
\begin{prop}
\label{cond_entrop} For every $n\in\mathbb N$,
\begin{equation}
\label{uslentrop} H_\nu(\zeta|(\zeta_{-n}^{-1}(\tau))=\infty.
\end{equation}
\end{prop}
\begin{proof}
By definition, for an arbitrary atom $A$ of $\zeta_{-n}^{-1}(\tau)$,
we have $A=\tau^{-1}A_1\cap\tau^{-2}A_2\cap\dots\cap\tau^{-n}A_n$,
where $A_i$ is an atom of $\zeta$ and $A_i\subset C_{k_i}$ for some
$k_i\in\mathbb N$. Moreover, for each $i$,
\begin{equation}
\label{Atom} A_i=C_{m_i(0)}\cap\tau^{-1}C_{m_i(1)}\cap\dots\cap
\tau^{-2^{k_i}}C_{m_i(2^{k_i})},
\end{equation}
where $m_i(0)=k_i$ and $m_i(j)\in\mathbb N$, $1\le j\le 2^{k_i}$.
Hence
\begin{equation}
\label{shiftA} \tau^{-i}A_i=\tau^{-i}C_{m_i(0)}\cap
\tau^{-i-1}C_{m_i(1)}\cap\dots\cap\tau^{-i-2^{k_i}}C_{m_i(2^{k_i})},\
\ 1\le i\le n.
\end{equation}

In order to evaluate $H(\zeta|(\zeta_{-n}^{-1}(\tau)_{-n}^0)$, we
will describe the partition induced by $\zeta$ on $A$. Let
$$
l(A):=\max_{1\le i\le n}(i+2^{k_i}).
$$
Then $A$ is a cylinder defined by fixing the coordinates $y_r$ with
$-l(A)\le r\le -1$. For every atom $A'$ of $\zeta$, we have
$A'\subset C_{k'}$, where $k'\in\mathbb N$. Then $A'$ is a cylinder
defined by fixing coordinates $y_r$ with $-2^{k'}\le r\le 0$.
Therefore, if $2^{k'}\le l(A)$, then
\begin{equation}
\label{intersec} A\cap A'=\emptyset\text{ or }A\cap C_{k'},
\end{equation}
and if $2^{k'}>l(A)$, then
\begin{equation}
\label{intersec1}A\cap A'=\emptyset\text{ or } A\cap C_{m'(0)}\cap
\tau^{-l(A)-1}C_{m'(1)}\cap\dots\cap\tau^{-2^{k'}}C_{m'(2^{k'})},
\end{equation}
where $m'(0)=k'$,  $m'(r)\in\mathbb N$ for $1\le r\le 2^{k'}$ (see
(\ref{Atom}, (\ref{shiftA})).

By definition
\begin{equation}
\label{cond_ent1} H_\nu\left(\zeta|\zeta_{-n}^{-1}(\tau)\right)=
\sum_{A\in\zeta_{-n}^{-1}(\tau)}\nu(A)H_\nu(\zeta|A).
\end{equation}
Using (\ref{intersec1}), the independence of the partitions
$\tau^i\eta$ for different indices $i$, and standard properties of
entropy, we obtain
\begin{align}
\label{cond_ent2} H_\nu(\zeta|A)\ge&\sum_{k\ge
1}\mu(C_k|A)H_\nu(\zeta|C_k\cap
A)\ge\sum_{k:2^k>l(A)}\nu(C_k|A)H_\nu(\zeta|C_k\cap A)\notag \\
=\sum_{k:2^k>l(A)}&\frac{1}{2^k}H_\nu\left(\eta_{-2^k}^{-l(A)-1}(\tau)
\right)=\sum_{k:2^k>l(A)}\frac{1}{2^k}(2^k-l(A))H_\nu(\eta)=\infty.
\end{align}
Now (\ref{cond_ent1}) and (\ref{cond_ent2}) yield (\ref{uslentrop}).
\end{proof}

We now label the atoms of $\zeta$ by positive integers, introduce
the set $X:=\mathbb N^\mathbb Z$ and, for every point $y\in Y$,
write $\varphi(y)_i=k$ if $\tau^{-i}y$ belongs to the atom of
$\zeta$ labeled by $k$, $k\in\mathbb N$, $i\in\mathbb Z$. The
mapping $\varphi:Y\to X$ so defined is clearly an embedding. Let $T$
be the shift on $X$ (similar to $\tau$) and $\mu:=\varphi\circ\nu$.
Then the dynamical systems $(Y,\nu,\tau)$ and $(X,mu,T)$ are
isomorphic. Hence
\begin{equation}
\label{entrop_equal} h_\mu(T)= h_{\nu}(\tau)=H_\nu(\eta)<\infty.
\end{equation}

Let $\xi$ be the partition of $X$ into the sets $\{x\in X:x_0=k\}$,
$k\in\mathbb N$ (the 1-dimensional cylinders in $X$ with support
$\{0\}$). For $n=1,2,\dots$ we define a measure $\mu_n$ on $X$ such
that

(a) it is $T$-invariant,

(b) it satisfies $\mu_n(A)=\mu(A)$ for all $A\in \xi_{-n}^0(T)$,

(c) it is a Markov measure of order $n$.

Property (c) means that, for all $A\in\xi$, all $m\in\mathbb N$, and
all $A'\in\xi_{-n-m}^{-1}(T)$ with $\mu(A')>0$, we have
$\mu_n(A|A')=\mu_n(A|A'')$, where $A''$ is the atom of
$\xi_{-n}^{-1}(T)$ containing $A$. It is easy to check that there
exists a unique measure with properties (a) -- (c). We call it the
$n$-{\it Markov hull} of $\tilde\mu$.

From (\ref{uslentrop}) it follows that
\begin{equation}
\label{entrop_equal1} H_{\mu_n}(\xi|\xi_{-\infty}^{-1}(T))=
H_{\mu_n}(\xi|\xi_{-n}^{-1}(T))=H_\nu(\zeta|\zeta_{-n}^{-1}(\tau))=
\infty.
\end{equation}

We claim that
\begin{equation}
\label{entrop_equal2}H_{\mu_n}(\xi|\xi_{-\infty}^{-1}({\tau}))=
h_{\mu_n}(T).
\end{equation}
Indeed, if the entropy $H_{\mu_n}(\xi)$ of the generator $\xi$ for
$(T,\mu_n)$ were finite, (\ref{entrop_equal2}) would be a standard
property of the entropy. Otherwise it does not generally hold.
However, Pitskel \cite{P} observed that (\ref{entrop_equal2}) does
hold for every Markov measure of order 1, be the entropy of the
generator finite or not. From this it follows that
(\ref{entrop_equal2}) holds for Markov measures of all orders, so
that $h_{\mu_n}(T)=\infty$ for all $n$.

On the other hand, it is seen that if we put $r_n=n$, then
conditions (\ref{limits}) -- (\ref{generator}) will hold for any
$\varepsilon_n\to 0$. However, by (\ref{entrop_equal}) --
(\ref{entrop_equal2}) the conclusion (\ref{conclud}) fails.

\section{Another estimate for $h_{\mu_0}(T)$}
\label{Another} We number in an arbitrary way the atoms of $\xi$,
pick an increasing sequence of positive integers $q(m)$,
$m=1,2,\dots$, and replace all the atoms with labels $\ge q(m)$ by
their union. The partition thus obtained will be denoted by $\xi_m$
and its atoms by $A_1,\dots,A_{q(m)}$.
\begin{theorem}
\label{two} If all the conditions of Theorem \ref{main} with one
possible exception, namely, $h_{\mu}(\xi)<\infty$, are fulfilled,
then
\begin{equation}
\label{limsup} h_{\mu}(T)\ge\limsup_{m\to\infty}
\limsup_{n\to\infty}h_{\mu_n}(T,\xi_m).
\end{equation}
\end{theorem}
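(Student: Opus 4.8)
The plan is to re-run the argument of Theorem~\ref{main} verbatim, but with the finite partition $\xi_m$ in place of the (possibly infinite-entropy) generator $\xi$, and then let $m\to\infty$. The crucial point is that each $\xi_m$ is finite, so $H_\mu(\xi_m)\le\ln q(m)<\infty$ and $H_{\mu_n}(\xi_m)<\infty$ automatically; no finite-entropy hypothesis on $\xi$ itself is needed. The price is that $\xi_m$ is only coarser than $\xi$ and need not generate, so everything must be carried out at the level of the partition entropies $h_{\mu_n}(T,\xi_m)$ and $h_\mu(T,\xi_m)$ rather than of $h_{\mu_n}(T)$; this is exactly why the conclusion \eqref{limsup} is phrased in terms of $h_{\mu_n}(T,\xi_m)$.

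First I would fix $m$ and transfer the measure comparison from $\xi$ to $\xi_m$. Since $\xi_m\le\xi$, for each $n$ the block $(\xi_m)_{-r_n}^{-1}(T)$ is coarser than $\xi_{-r_n}^0(T)=\vee_{i=0}^{r_n}T^{-i}\xi$, so every atom $A\in(\xi_m)_{-r_n}^{-1}(T)$ satisfies $A\subset\xi_{-r_n}^0(T)$. Applying \eqref{uslovie} together with Lemma~\ref{AinEta} gives $|\mu(A)-\mu_n(A)|\le\varepsilon_n\mu_n(A)$ for all such $A$, and for $n$ with $\varepsilon_n\le1/2$ this yields $|\mu(A)-\mu_n(A)|\le2\varepsilon_n\mu(A)$ exactly as in \eqref{uslovie1}. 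Numbering the (finitely many) atoms of $(\xi_m)_{-r_n}^{-1}(T)$ and invoking Lemma~\ref{entrop_razn} with $c=2\varepsilon_n$ (legitimate once $2\varepsilon_n<1/3$), we obtain
\begin{equation*}
H_{\mu_n}\bigl((\xi_m)_{-r_n}^{-1}(T)\bigr)\le(1+2\varepsilon_n)H_{\mu}\bigl((\xi_m)_{-r_n}^{-1}(T)\bigr)+2\varepsilon_n\ln3.
\end{equation*}

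Next I would use subadditivity of partition entropy, which is valid for the finite partition $\xi_m$ without any generator assumption: $h_{\mu_n}(T,\xi_m)\le\frac{1}{r_n}H_{\mu_n}\bigl((\xi_m)_{-r_n}^{-1}(T)\bigr)$. Combining this with the previous display gives
\begin{equation*}
h_{\mu_n}(T,\xi_m)\le\frac{1}{r_n}(1+2\varepsilon_n)H_{\mu}\bigl((\xi_m)_{-r_n}^{-1}(T)\bigr)+\frac{2\varepsilon_n}{r_n}\ln3.
\end{equation*}
Letting $n\to\infty$ with $m$ fixed, we have $r_n\to\infty$ and $\varepsilon_n\to0$ by \eqref{limits}; since $\xi_m$ has finite entropy, $\frac{1}{r_n}H_{\mu}\bigl((\xi_m)_{-r_n}^{-1}(T)\bigr)\to h_{\mu}(T,\xi_m)$, while the remaining terms vanish. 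Hence $\limsup_{n\to\infty}h_{\mu_n}(T,\xi_m)\le h_{\mu}(T,\xi_m)$.

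Finally I would take $\limsup_{m\to\infty}$. Because each $\xi_m$ is a finite partition, the definition of $h_{\mu}(T)$ as the supremum of $h_{\mu}(T,\zeta)$ over finite partitions $\zeta$ gives $h_{\mu}(T,\xi_m)\le h_{\mu}(T)$ for every $m$, and monotonicity of $\limsup$ yields
\begin{equation*}
\limsup_{m\to\infty}\limsup_{n\to\infty}h_{\mu_n}(T,\xi_m)\le\limsup_{m\to\infty}h_{\mu}(T,\xi_m)\le h_{\mu}(T),
\end{equation*}
which is \eqref{limsup}. (One could say more: since $\xi_m\uparrow\xi$ and $\xi$ generates, $h_{\mu}(T,\xi_m)\uparrow h_{\mu}(T)$, so the outer $\limsup$ is in fact a limit equal to $h_\mu(T)$; but only the inequality is needed here.) I expect the step requiring the most care to be the inner estimate: one must justify that the whole computation can be kept at the level of the partition entropies, so that the failure of $\xi_m$ to be a generator never enters, and one must check that Lemma~\ref{AinEta} genuinely transfers \eqref{uslovie} from the finer blocks of $\xi$ to the coarser blocks of $\xi_m$.
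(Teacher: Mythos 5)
Your proof is correct, but it takes a genuinely different route from the paper's. You rerun the quantitative argument of Theorem \ref{main} with the finite partition $\xi_m$ in place of $\xi$: Lemma \ref{AinEta} transfers \eqref{uslovie} to the coarser blocks $(\xi_m)_{-r_n}^{-1}(T)$ (each of whose atoms is indeed a union of atoms of $\vee_{i=0}^{r_n}T^{-i}\xi$), Lemma \ref{entrop_razn} with $c=2\varepsilon_n$ compares the block entropies, and since $\xi_m$ is finite the normalized block entropies $\frac{1}{r_n}H_\mu\bigl((\xi_m)_{-r_n}^{-1}(T)\bigr)$ converge to $h_\mu(T,\xi_m)$ with no generator hypothesis, after which the trivial bound $h_\mu(T,\xi_m)\le h_\mu(T)$ finishes. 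The paper instead regains the compact-expansive setting mentioned in its introduction: for each $m$ it maps $X$ to the compact subshift $Y_m=\{1,\dots,q(m)\}^{\mathbb Z}$ by the $\xi_m$-itinerary map $\psi_m$, observes that \eqref{uslovie} forces the push-forwards $\mu_n^m=\psi_m\circ\mu_n$ to converge weakly to $\mu^m=\psi_m\circ\mu$, and invokes upper semicontinuity of entropy for the expansive shift $\sigma_m$ on the compact space $Y_m$, together with the identifications $h_{\mu^m}(\sigma_m)=h_\mu(T,\xi_m)$ and $h_{\mu_n^m}(\sigma_m)=h_{\mu_n}(T,\xi_m)$, to obtain the same inner inequality $\limsup_{n\to\infty}h_{\mu_n}(T,\xi_m)\le h_\mu(T,\xi_m)$; it then sends $m\to\infty$ as you do (the paper even notes $h_\mu(T,\xi_m)\to h_\mu(T)$, though, as you observe, only the inequality $h_\mu(T,\xi_m)\le h_\mu(T)$ is needed). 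The trade-off: your argument is self-contained and quantitative, producing the explicit error terms $(1+2\varepsilon_n)$ and $2\varepsilon_n\ln 3/r_n$, but it exploits the full multiplicative strength of \eqref{uslovie} uniformly over all atoms of the $r_n$-block; the paper's argument uses only the weaker qualitative consequence $\mu_n(A)\to\mu(A)$ on each fixed cylinder, at the price of importing the semicontinuity theorem, and it exhibits Theorem \ref{two} conceptually as a reduction to the compact expansive case. Two small points of care in your version: pad the finitely many atoms of $(\xi_m)_{-r_n}^{-1}(T)$ with empty sets before applying Lemma \ref{entrop_razn}, which is stated for countable distributions, and note (as you do) that the lemma requires $2\varepsilon_n<1/3$, hence $n$ large.
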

\begin{proof}
We fix $m$ and introduce the sequence space
$Y_m:=\{1,\dots,q(m)\}^\mathbb Z$ with a standard metric $\rho_m$,
the Borel $\sigma$-algebra $\mathcal B_m$, and the one step right
shift $\sigma_m$ on $Y_m$. Then we define a mapping $\psi_m:X\to
Y_m$ by $\psi_mx=y$, where $y_n=i$ provided that $T^{-n}x\in A_i$.
It is easy to check that $\psi_mTx=\sigma_m\psi_mx$ for all $x\in
X$.

For every $m\in\mathbb N$ and $n\in\mathbb Z^+$ consider the
probability measures $\mu^m:=\psi_m\circ\mu$ and
$\mu_n^m:=\psi_m\circ\mu_n$ on $(Y_m,\mathcal B_m)$. From
(\ref{uslovie}) it follows that $\mu_n^m$ converges weakly to
$\mu^m$ as $n\to\infty$. Since the space $(Y_m,\rho_m)$ is compact
and $\sigma_m$ is expansive, we have
$$
h_{\mu^m}(\sigma_m)\ge\limsup_{n\to\infty}h_{\mu_n^m}(\sigma^m),\ \
m\in\mathbb N.
$$
But it is clear that
$$
h_{\mu^m}(\sigma_m)=h_{\mu}(T,\xi_m),\ \
h_{\mu_n^m}(\sigma_m)=h_{\mu_n}(T,\xi_m),\ \ m\in\mathbb N,\ \
n\in\mathbb Z^+.
$$
Hence
$$
h_\mu(T,\xi_m)\ge\limsup_{n\to\infty}h_{\mu_n}(T,\xi_m),\ \
m\in\mathbb N.
$$
From well-known properties of entropy it follows that
$h_\mu(T,\xi_m)$ tends to $h_\mu(T)$ as $m\to\infty$. This implies
(\ref{limsup}).
\end{proof}
In conclusion we give a simple example where the assumptions of
Theorem \ref{two} are satisfied and this theorem yields the correct
value of $h_\mu(T)$.
\begin{example}
\label{th2} Let $Y$, $\mathcal C$, $\tau$, and $\eta$ be as in
Section \ref{Pits}, and let $\nu$ be a $\tau$-invariant probability
measure on $(Y,\mathcal C)$. Consider a function $f:Y\to\mathbb Z_+$
with $\nu(f)<\infty$ such that $f$ is constant on each atom of
$\eta$. The integral (suspension) automorphism $\hat\tau$
constructed by $\tau$ and $f$ acts on the space $\hat
Y:=\{(y,u):x\in Y,u\in\mathbb Z_+,u\le f(x)\}$ by the formulas
$\hat\tau(y,u)=(y,u+1)$ if $u<f(y)$ and $\hat\tau(y,u)=(\tau y,0)$
if $u=f(y)$. Clearly $\hat\tau$ preserves the probability measure
$\hat\nu:=\frac{1}{\nu(f)+1}(\nu\times\lambda)_{\hat Y}$, where
$\lambda$ is the counting measure on $\mathbb Z_+$.

Let $\eta=\{C_1,C_2,\dots\}$ and $f(y)=f_i$ when $y\in C_i$. By
construction $\sum_{n=1}^\infty f_i\nu(C_i)<\infty$. We may also
assume that $\sum_{n=1}^\infty f_i\nu(C_i)|\log\nu(C_i)|=\infty$.
Then $H_{\hat\nu}(\hat\eta)=\infty$, where $\hat\eta$ is the
partition of $\hat Y$ whose atoms are $C_{i,k}=\{(y,u):y\in
C_i,u=k\}$, $i\in\mathbb N$, $k=0,1,\dots,f_i$. Clearly $\hat\eta$
is a countable generator for $(\hat\nu,\hat\tau)$.

We denote the $n$-Markov hull of $\mu$ (see Section \ref{Pits} for a
definition) by $\mu_n$ and define a measure $\hat\mu_n$ on $\hat X$
by
$$
\hat\mu_n|_{T^iC_k}:=\frac{1}{\mu(f)+1}\hat T^i\circ(\mu|_{C_k}),\ \
k\in\mathbb N,\ \ 0\le i\le f_k.
$$
Let $\hat\xi_m$ be the partition of $\hat X$ whose atoms are
$C_{i,k}$, $1\le i\le m$, $0\le k\le f_i$, and the union of
$C_{i,k}$ over all $i>m$ and $0\le k\le f_i$.

Taking $\hat X$, $\hat\mu$, $\hat T$, $\hat\xi$, and $\hat
r_m:=\sum_{i=1}^m (f_i+1)$ for $X$, $\mu$, $T$, $\xi$, and $r_m$,
respectively, in Theorem \ref{two} one can easily check that the
condition of this theorem  are satisfied for any $\varepsilon_n\to
0$.
\end{example}

\end{document}